\author[Aram L. Karakhanyan]{Aram L. Karakhanyan}
\title[Monotonicity Formula]{A monotonicity formula for a classical free boundary problem}
\address{School of Mathematics, The University of Edinburgh, EH9 3FD, UK}
\email{aram6k@gmail.com}
\thanks{$2000$ {\it Mathematics Subject Classification.\/} Primary
 35R35, 35K05, 35K55.}
\date{}
\theoremstyle{plain}
\newtheorem{theorem}{Theorem}[section]
\newtheorem{lemma}[theorem]{Lemma}
\newtheorem{prop}[theorem]{Proposition}
\newtheorem{corollary}[theorem]{Corollary}
\theoremstyle{remark}
\newtheorem{remark}[theorem]{Remark}
\theoremstyle{definition}
\renewcommand\epsilon\varepsilon 
\renewcommand\phi\varphi 
\renewcommand\div{\operatorname{div}} 
\newcommand\R{\mathbb{R}} 
\numberwithin{equation}{section}
\newcommand\I[1]{\chi_{\{#1\}}}  
\newcommand\fb[1]{\partial{\{#1>0\}}}  
\renewcommand\H{\mathscr H}
\begin{document}

\begin{abstract}
We construct a  monotonicity formula for the free boundary problem of the form 
$\Delta u=\mu$, where 
$\mu$ is  a Radon measure.  It implies that the blow up limits of solutions are 
homogenous functions of degree one. The first formula is new even for classical Laplace operator.

Our method of proof uses a careful application of the 
strong maximum principle. 
\end{abstract}

\maketitle

%
%


\section{Introduction}
In this note we construct a monotonicity formula for the local  
minimizers of the functional
\begin{eqnarray}\nonumber
 J(u)=\int_{\Omega}|\nabla u|^2
+
\lambda \I {u>0},
\end{eqnarray}
where $\Omega\subset \R^n$ and $\chi _D$ is the characteristic function of a set $D$.
Our main result is

\begin{theorem}\label{theorem-1}
Let $u$ be a local minimizer of $J$, and $0\in \fb u$, then 
\begin{equation}
K(r)=\frac1{|B_r|}\int_{B_r}\I {u>0}
\end{equation}
is a monotone non-decreasing function of $R$. Furthermore, $K$ is constant if
and only if the set $\{u>0\}$ is a cone and $u$ is a homogeneous function of degree one.
\end{theorem}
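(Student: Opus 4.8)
The plan is to reduce the monotonicity of $K$ to a differential inequality for the volume of the positivity set, and then to establish that inequality by comparing $u$ with its dilations. Write $V(r):=\int_{B_r}\I{u>0}=|\{u>0\}\cap B_r|$, so that $K(r)=V(r)/(|B_1|\,r^{n})$. Since $V$ is locally Lipschitz with $V'(r)=\H^{n-1}(\{u>0\}\cap\partial B_r)$ for a.e.\ $r$, one has $K'(r)=\bigl(rV'(r)-nV(r)\bigr)/(|B_1|\,r^{n+1})$, so it is enough to prove
\begin{equation}\label{eq:star}
 r\,\H^{n-1}(\{u>0\}\cap\partial B_r)\ \ge\ n\,|\{u>0\}\cap B_r|\qquad\text{for a.e.\ }r .
\end{equation}
Since $u$ is a minimizer, $\{u>0\}$ has locally finite perimeter; applying the Gauss--Green theorem to the radial field $X(x)=x$ on $\{u>0\}\cap B_r$, with $\nu$ the (measure-theoretic) unit normal of $\{u>0\}$ pointing into $\{u>0\}$, identity \eqref{eq:star} is equivalent to
\begin{equation}\label{eq:starstar}
 \int_{\fb u\,\cap\, B_r}x\cdot\nu\;d\H^{n-1}\ \ge\ 0 ,
\end{equation}
i.e.\ to the assertion that the positivity set is, on average, star-shaped with respect to infinity.

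The core of the argument, and the place where the strong maximum principle enters, is \eqref{eq:starstar}. For $\tau>1$ the dilation $u_\tau(x):=\tau^{-1}u(\tau x)$ is again a local minimizer of $J$ (the functional is dilation invariant), with $\{u_\tau>0\}=\tau^{-1}\{u>0\}$, free boundary gradient again $\sqrt{\lambda}$, and with $u,u_\tau$ positive and harmonic on their respective positivity sets. I would compare $u$ with $u_\tau$ for $\tau$ close to $1$ to show that the angular cross-section $\Sigma(r):=\{\omega\in S^{n-1}:u(r\omega)>0\}$ is non-decreasing in $r$ up to $\H^{n-1}$-null sets. If this failed, then on a set of positive $\H^{n-1}$ measure one would find regular free boundary points $x_0=r_0\omega_0$ at which $\{u>0\}$ lies on the side of $x_0$ closer to the origin, so that $x_0\cdot\nu\le 0$ there, with $x_0\cdot\nu<0$ on a subset of positive $\H^{n-1}$ measure; localizing near such a point and comparing $u$ with $u_\tau$ just inside it, their difference is harmonic in a region touching $x_0$, has a definite sign there, and vanishes at $x_0$, so Hopf's lemma would contradict the equality of the two inner normal derivatives, both equal to $\sqrt{\lambda}$ by the free boundary Euler--Lagrange condition for $J$. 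Granting the monotonicity of $r\mapsto\Sigma(r)$, integrating $\H^{n-1}(\Sigma(r))$ against $r^{n-1}$ yields \eqref{eq:star}; moreover one reads off $x\cdot\nu\ge 0$ at $\H^{n-1}$-a.e.\ point of $\fb u$. The main obstacle is exactly this step: the positivity set is a priori only measurable, so making the comparison rigorous leans on the regularity theory of minimizers (Lipschitz continuity, nondegeneracy, $C^{1,\alpha}$ regularity of the free boundary off a singular set of small dimension), and the contradiction has to be produced at free boundary points through Hopf's lemma.

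It remains to treat the equality case. If $K$ is constant on $(0,R)$ then \eqref{eq:star} is an equality for a.e.\ $r$, i.e.\ $\int_{\fb u\cap B_r}x\cdot\nu\,d\H^{n-1}\equiv 0$; combined with the pointwise bound $x\cdot\nu\ge 0$ obtained above, this forces $x\cdot\nu=0$ $\H^{n-1}$-a.e.\ on $\fb u$. Hence the radial vector field $X(x)=x$ is tangent to $\fb u$, so $\fb u$, and therefore $\{u>0\}$, is invariant under dilations, i.e.\ $\{u>0\}$ is a cone $C$. Then $u$ is harmonic in $C$, vanishes on $\partial C$, and $|\nabla u|=\sqrt{\lambda}$ on $\partial C\setminus\{0\}$. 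Expanding $u=\sum_k c_k\,r^{\gamma_k}\phi_k(\omega)$ in Dirichlet eigenfunctions $\phi_k$ of the spherical cross-section $\Sigma=C\cap S^{n-1}$, with $\gamma_k(\gamma_k+n-2)$ the $k$-th Dirichlet eigenvalue, the Lipschitz bound and the nondegeneracy of $u$ near the origin force the smallest exponent to be $\gamma_1=1$; since on $\partial C$ one has $|\nabla u|=\bigl|\sum_k c_k\,r^{\gamma_k-1}\,\partial_n\phi_k\bigr|$, the requirement that this be the $r$-independent constant $\sqrt{\lambda}$ eliminates every term with $\gamma_k>1$. Therefore $u=c_1\,r\,\phi_1(\omega)$ is homogeneous of degree one. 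Conversely, if $\{u>0\}$ is a cone and $u$ is homogeneous of degree one, then $\I{u>0}$ is $0$-homogeneous, so $\int_{B_r}\I{u>0}=r^{n}\int_{B_1}\I{u>0}$ and $K\equiv K(1)$.
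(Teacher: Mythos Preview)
Your reduction is exactly right: $K'(r)=\frac{1}{|B_1|r^{n+1}}\int_{\fb u\cap B_r}x\cdot\nu\,d\H^{n-1}$, and everything comes down to showing $x\cdot\nu\ge0$ $\H^{n-1}$-a.e.\ on the free boundary. The paper derives the same identity.

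The gap is in your dilation comparison. You want to compare $u$ with $u_\tau(x)=\tau^{-1}u(\tau x)$ near a regular point $x_0$ with $x_0\cdot\nu(x_0)<0$, and invoke Hopf at $x_0$ using that ``the two inner normal derivatives are both $\sqrt{\lambda}$''. But $x_0$ is a free boundary point of $u$, not of $u_\tau$; the free boundary of $u_\tau$ passes through $x_0/\tau$. If $x_0\cdot\nu(x_0)<0$, then for $\tau>1$ close to $1$ the point $\tau x_0$ lies in the interior of $\{u=0\}$, so $u_\tau(x_0)=0$ and $\nabla u_\tau(x_0)=0$, not $\sqrt{\lambda}\,\nu$; there is no contradiction. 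Nor can you fall back on a sliding argument (take $\tau$ large and decrease): since $0\in\fb u$, there is no $\tau$ for which $\tau^{-1}\{u>0\}$ is contained in, or contains, $\{u>0\}$, so there is no initial ordering to slide from and no first touching point. Finally, the ``definite sign'' of $u-u_\tau$ near $x_0$ is asserted but not proved; it is essentially equivalent to the star-shapedness you are trying to establish. Your equality argument then inherits this gap, since it uses the pointwise inequality $x\cdot\nu\ge0$ to upgrade $\int x\cdot\nu=0$ to $x\cdot\nu\equiv0$.

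The paper fixes this by passing to the infinitesimal version: $w(x)=\nabla u(x)\cdot x-u(x)=\partial_\tau|_{\tau=1}\,\tau u_\tau(x)$ satisfies $\Delta w=0$ in $\{u>0\}$ (more generally $a_{ij}w_{ij}=b\cdot\nabla w$), so the strong maximum principle applies directly to $w$ rather than to $u-u_\tau$. The inequality $\limsup_{x\to x_0}w/|x|\ge0$ at every free boundary point is then obtained not by comparing two minimizers at the same point, but by a blow-up contradiction: assuming the $\limsup$ is negative, one rescales, passes to a limit minimizer $\bar u$ on which $v=-w/|x|$ attains an interior minimum, applies the strong maximum principle to force $v\equiv\ell^2>0$, and then gets a contradiction with the sign of the mean curvature of $\partial\{\bar u>0\}$ via Hopf's lemma applied to $\nabla\bar u\cdot N$. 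The Hopf step is applied to a directional derivative of $\bar u$, whose boundary extremum genuinely sits on the free boundary of $\bar u$; this is what your dilation argument is missing.
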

There are a number of monotonicity formulae for this problem, see \cite{APh}, \cite{ACF}, \cite{Spruck}, \cite{W-cpde}.
However, I believe that the monotonicity of $K(r)$ has not been proved in the literature even for the classical Alt-Caffarelli problem \cite{AC}.  
In view of the Allard monotonicity formula \cite{Simon}, $K(R)$
is the natural density function that enjoys monotonicity. 
The proof of Theorem \ref{theorem-1} is hinging on a
local analysis of the free boundary structure by employing the 
strong maximum principle.

%
%
\section{The maximum principle}
We use a slightly general set up. Let 
$F(t):\R_+\mapsto \R_+, F(t)\in C^{2, 1}[0, \infty]$ satisfies the following conditions:
\begin{itemize}
 \item[$(\bf F_1)$] \quad $F(0)=0,\qquad  c_0\leq F'(t)\leq C_0, $ 
\item[$(\bf F_2)$] \quad $\displaystyle 0\leq F''(t)\leq \frac{C_0}{1+t}, $ 
\end{itemize}
for some positive constants $c_0, C_0$, see  \cite{ACF-Quasi} page 2. 
Let 

\begin{equation}
J_F=\int_\Omega F(|\nabla u|^2)+\lambda \I{u>0}
\end{equation}


Note that $\div(F'(|\nabla u|^2)\nabla u)$
is a measure supported on the boundary of the set 
$\{u>0\}$. 
If $u$ is a local minimizer of $J_F$, then 
$a_{ij}u_{ij}=0$ in $\Omega^+=\{u>0\}\cap \Omega$, where $a_{ij}=F'(|\nabla u|^2)\delta_{ij}+2F''(|\nabla u|^2)u_iu_j$, see \cite{ACF-Quasi}. 
Recall the free boundary condition is of the form
\begin{equation}
2|\nabla u|^2F'(|\nabla u|^2)-F(|\nabla u|^2)=\lambda, 
\end{equation}
see \cite{ACF-Quasi}. 
Consequently $|\nabla u|=\lambda^*$ on the free boundary, 
for some constant $\lambda^*$ determined from the above 
condition. Throughout this paper as assume that $\lambda^*=1$. 
\begin{prop}\label{prop-strong-max}
Let $u\in C^3(\Omega)$ be a solution to $a_{ij}u_{ij}=0$ in $\Omega$, where 
$a_{ij}=F'(|\nabla u|^2)\delta_{ij}+2F''(|\nabla u|^2)u_iu_j$. 
Then the function $w(x)=\nabla u(x)\cdot x -u(x)$ satisfies 
\begin{equation}
a_{ij}w_{ij}=b\cdot \nabla w, 
\end{equation}
where 
\begin{equation}
b=-2\left\{\left[F''(|\nabla u|^2)\Delta u +2 F'''(|\nabla u|^2)\nabla u D^2 u\nabla u\right]\nabla u+2F''(|\nabla u|^2)\nabla uD^2 u\right\}.
\end{equation}
\end{prop}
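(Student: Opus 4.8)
The quantity $w=x\cdot\nabla u-u$ is the natural ``degree‑one homogeneity defect'': it vanishes precisely when $u$ is positively homogeneous of degree one, so showing that it solves a homogeneous linear drift equation is exactly what makes the strong maximum principle applicable to it later. The plan is a direct differentiation of the equation $a_{ij}u_{ij}=0$ along the radial field $x$. First I would record, by differentiating $w=x_iu_i-u$ twice (the first‑order terms cancel),
\begin{equation}
w_k = x_i u_{ik}, \qquad w_{kl} = u_{kl} + x_i u_{ikl},
\end{equation}
so that $\nabla w = D^2u\,x$. Contracting the second identity with the symmetric matrix $a_{kl}$ and using $a_{kl}u_{kl}=0$ gives immediately
\begin{equation}
a_{kl}w_{kl} = x_i\, a_{kl}u_{ikl}.
\end{equation}

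Next I would differentiate the PDE itself. Since $u\in C^3$, differentiating $a_{kl}u_{kl}=0$ in $x_i$ yields $a_{kl}u_{ikl} = -(\partial_i a_{kl})u_{kl}$, hence $a_{kl}w_{kl} = -\,x_i(\partial_i a_{kl})u_{kl}$. It then remains to expand $\partial_i a_{kl}$ by the chain rule: writing $q=|\nabla u|^2$ so that $\partial_i q = 2u_mu_{mi}$ and $a_{kl}=F'(q)\delta_{kl}+2F''(q)u_ku_l$, one obtains
\begin{equation}
\partial_i a_{kl} = 2u_mu_{mi}\bigl[F''(q)\delta_{kl}+2F'''(q)u_ku_l\bigr]+2F''(q)\bigl(u_{ki}u_l+u_ku_{li}\bigr).
\end{equation}

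The last step is pure bookkeeping: multiply the previous display by $x_iu_{kl}$, sum, and simplify using the three contractions $x_iu_{ki}=w_k$, $x_i\partial_iq = 2u_m x_i u_{mi}= 2\,\nabla u\cdot\nabla w$, together with $\delta_{kl}u_{kl}=\Delta u$, $u_ku_lu_{kl}=\nabla u\,D^2u\,\nabla u$, and the symmetry $u_{ki}u_lu_{kl}=u_ku_{li}u_{kl}$, which lets the two $F''$‑terms coalesce. Collecting the coefficient of $\nabla u\cdot\nabla w$ and the coefficient of $\nabla w$ and factoring out $-2$ produces exactly $a_{kl}w_{kl}=b\cdot\nabla w$ with $b$ as stated, where $\nabla u D^2u$ is read as the vector with components $u_ku_{kl}$. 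I expect the only delicate point to be this final index contraction — in particular keeping straight which sums close up into $\nabla w$ versus into $\nabla u$, and checking the symmetry that merges $2F''(u_{ki}u_l+u_ku_{li})$ into the single term $2F''\,\nabla u D^2u$ in $b$. I would also note that $b$ involves $F'''$, so the identity is to be understood for $F$ smooth enough (or, under $(\mathbf F_1)$–$(\mathbf F_2)$, with $F'''$ taken in the almost‑everywhere sense); no maximum principle enters here — this proposition is the computational lemma that sets up the strong maximum principle argument used in the sequel.
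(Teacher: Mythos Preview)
Your proposal is correct and follows essentially the same route as the paper: compute $w_i=x_mu_{mi}$, $w_{ij}=u_{ij}+x_mu_{mij}$, contract with $a_{ij}$, replace $a_{ij}u_{mij}x_m$ by $-\partial_m(a_{ij})u_{ij}x_m$ via differentiating the equation, expand $\partial_m a_{ij}$ by the chain rule, and regroup using $x_mu_{mk}=w_k$ together with the symmetry that merges the two $F''$ terms. Your remark that the identity needs $F'''$ (hence either $F$ smooth enough or the $C^{2,1}$ hypothesis used a.e.) is a fair caveat the paper leaves implicit.
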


\begin{remark}
If $F$ is a linear function, i.e. $a_{ij}u_{ij}=\Delta u$, then $b=0$.
\end{remark}

\begin{proof}
The derivatives of $w$ can be readily computed as follows
\begin{align}
w_i=u_{mi}x_m, \\
w_{ij}=u_{mij}x_m+u_{ij}.
\end{align}
Contracting the Hessian $D^2w$ with the coefficient matrix $a_{ij}$ and 
using the equation $a_{ij}u_{ij}=0$ we get 
\begin{align*}
a_{ij}w_{ij}&= a_{ij}u_{mij}x_m+a_{ij} u_{ij}=\\
&= 
a_{ij}u_{mij}x_m\\
&=
-\partial_{x_m}(a_{ij})u_{ij}x_m.
\end{align*}
Using the definition of $a_{ij}$ we have  
\begin{align*}
\partial_{x_m}(a_{ij})
&=
\partial_{x_m}(F'\delta_{ij}+2F''u_iu_j)\\
&=
2F''\nabla u\nabla u_m\delta_{ij}+4F'''\nabla u\nabla u_m u_iu_j
+
2F''(u_{im}u_j+u_iu_{mj}).
\end{align*}
Consequently, 
\begin{align*}
-\frac12 \partial_{x_m}(a_{ij})u_{ij}x_m
&=
-F''\nabla u(\nabla u_mx_m)\Delta u-2F'''\nabla uD^2 u\nabla u (\nabla u_mx_m)\nabla u\\
&
-2F''\nabla uD^2 u (\nabla u_m x_m)\\
&=
-\Big\{\left[F''(|\nabla u|^2)\Delta u +2 F'''(|\nabla u|^2)\nabla u D^2 u\nabla u\right]\nabla u\\
&+2F''(|\nabla u|^2)\nabla uD^2 u\Big\}\cdot \nabla w\\
&=
\frac b2\cdot \nabla w. 
\end{align*}
We use the notation 
\begin{equation}\label{b}
b:=-\Big\{\left[F''(|\nabla u|^2)\Delta u +2 F'''(|\nabla u|^2)\nabla u D^2 u\nabla u\right]\nabla u+2F''(|\nabla u|^2)\nabla uD^2 u\Big\}
\end{equation}

Therefore, 
\begin{equation}\label{eq-b-der}
\partial_{x_m}(a_{ij})u_{ij}x_m=-b\cdot \nabla w=-b\cdot (\nabla u_m x_m),
\end{equation}
and the result follows.
\end{proof}
As a direct consequence of the strong maximum principle we have 
\begin{corollary}
If $0\in \fb u$, and $w$ achieves a local maximum or minimum in $\Omega^+$, then 
$w=0$ throughout $\Omega^+$, and $\Omega^+$ is a cone.
\end{corollary}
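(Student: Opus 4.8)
The plan is to read the Corollary as a direct application of the strong maximum principle to the equation produced in Proposition~\ref{prop-strong-max}. Since $w(x)=\nabla u(x)\cdot x-u(x)$ solves $a_{ij}w_{ij}=b\cdot\nabla w$ in $\Omega^+$, this is a linear elliptic equation with \emph{no} zeroth-order term, so an interior local extremum of $w$ must be a global one on the connected component of $\Omega^+$ that contains the extremal point, and $w$ is constant there. To run the maximum principle rigorously on small balls interior to $\Omega^+$ I would first record that $a_{ij}$ is uniformly elliptic: $a_{ij}\xi_i\xi_j = F'(|\nabla u|^2)|\xi|^2 + 2F''(|\nabla u|^2)(\nabla u\cdot\xi)^2$, which by $(\mathbf F_1)$ is $\ge c_0|\xi|^2$ and, using $F''(|\nabla u|^2)|\nabla u|^2\le C_0$ from $(\mathbf F_2)$, is bounded above. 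The drift $b$ in \eqref{b} is continuous in $\Omega^+$ because $u\in C^3$ there (with $F''$ Lipschitz and $F'''$ bounded a.e.), hence bounded on compact subsets; this is all the regularity the strong maximum principle and Hopf's lemma require.

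The second step is to identify the constant value of $w$ as zero, and this is where the hypothesis $0\in\fb u$ enters. Using the Lipschitz regularity of minimizers up to the free boundary (see \cite{ACF-Quasi}, \cite{AC}), one has $|\nabla u|\le L$ and $u(0)=0$, so $|u(x)|\le L|x|$ near the origin and therefore
\[
|w(x)| = |\nabla u(x)\cdot x - u(x)| \le 2L|x| \longrightarrow 0 \quad\text{as } x\to 0.
\]
Thus $w$ extends continuously to $0$ with value $0$. Provided the component $G$ of $\Omega^+$ on which $w$ is constant has the origin in its closure — which is the relevant configuration here, in particular in the blow-up setting where $\Omega^+$ is connected and $0\in\partial\Omega^+$ — continuity forces the constant to equal $0$, that is, $w\equiv 0$ on $G$.

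Finally, $w\equiv 0$ is exactly Euler's relation $\nabla u(y)\cdot y = u(y)$. Integrating along rays, $g(t):=u(tx)$ satisfies $g'(t) = \nabla u(tx)\cdot x = t^{-1}\nabla u(tx)\cdot(tx) = t^{-1}g(t)$, whence $u(tx) = t\,u(x)$ for all $t>0$; so $u$ is positively homogeneous of degree one and $\{u>0\}$ is a cone. Carrying this out on every connected component of $\Omega^+$ (or on all of $\Omega^+$ when it is connected) gives the statement.

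The main obstacle I expect is the second step: pinning the constant to $0$ needs both a regularity input — the Lipschitz bound up to the free boundary, guaranteeing that $w$ extends continuously to the origin with value $0$ — and the topological observation that the component of $\Omega^+$ carrying the extremum actually abuts the free boundary point $0$. The first step is immediate once Proposition~\ref{prop-strong-max} is in hand, and the last is a one-line ordinary differential equation along rays.
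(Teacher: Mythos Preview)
Your argument is correct and follows essentially the same route as the paper: apply the strong maximum principle to the equation from Proposition~\ref{prop-strong-max} to force $w$ constant, use $0\in\fb u$ to pin the constant at $0$, and read off degree-one homogeneity via Euler's relation. You supply more care than the paper does (uniform ellipticity of $a_{ij}$, local boundedness of $b$, the continuous extension of $w$ to $0$ via the Lipschitz bound, and the connectedness caveat), but the underlying strategy is identical.
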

\begin{proof}
If $w$ achieves a local maximum or minimum in $\Omega^+$, then 
by the strong maximum principle $w$ must be constant throughout $\Omega^+$. Since $w(0)=0$, it follows that 
$w=0$ throughout $\Omega^+$, and hence $u$ is homogeneous of degree one, i.e. 
$u(tx)=tu(x),$ for any  $t>0$
and $\Omega^+$ is a cone. 
\end{proof}

\begin{prop}\label{prop-main}
Let $u$ be a local minimizer of $J$ and $0\in \fb u$, and assume that 
$b\cdot x\le 0$ for any local minimizer having zero on its free boundary. Here $b$ is defined by \eqref{b}.
Then 
for any $x_0\in \fb u$ we have 
\begin{equation}\label{eq-ell}
\limsup_{
\substack{
x\to x_0\\
x\in \{u>0\}}
}\left(\nabla u \cdot \frac x{|x|}-\frac {u(x)}{|x|}\right)\ge 0.
\end{equation}

\end{prop}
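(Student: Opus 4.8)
The goal is a lower bound on the boundary behaviour of the function $w(x)/|x| = \nabla u(x)\cdot\frac{x}{|x|} - \frac{u(x)}{|x|}$, so it is natural to argue by contradiction. Suppose instead that for some $x_0\in\fb u$ we have
\[
\limsup_{\substack{x\to x_0\\ x\in\{u>0\}}} \left(\nabla u\cdot\frac{x}{|x|} - \frac{u(x)}{|x|}\right) < 0,
\]
i.e.\ $w(x) \le -\delta|x|$ for some $\delta>0$ and all $x\in\{u>0\}$ in a small neighbourhood of $x_0$. Since $|x|$ is bounded away from $0$ near $x_0$ (as $0\in\fb u$ and $x_0\ne 0$ generically; the case $x_0=0$ needs separate, easy treatment using $w(0)=0$), this says $w$ is strictly negative and bounded away from $0$ near $x_0$ from inside $\Omega^+$.

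The plan is then to exploit Proposition~\ref{prop-strong-max}: $w$ solves the linear elliptic equation $a_{ij}w_{ij} = b\cdot\nabla w$ in $\Omega^+$. Rewriting this as $a_{ij}w_{ij} - b\cdot\nabla w = 0$, the zeroth-order coefficient vanishes, so the strong maximum principle and Hopf's lemma apply to $w$ directly. The idea is to compare $w$ against a suitable barrier near $x_0$. On the free boundary one has $u=0$ and $|\nabla u|=1$ (the normalization $\lambda^*=1$), and since $u>0$ inside with $u$ vanishing on $\fb u$, the inward normal derivative is $1$; hence $\nabla u(x_0)$ is the inward unit normal $\nu$, and $w(x_0)=\nabla u(x_0)\cdot x_0 - u(x_0) = \nu\cdot x_0$. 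The hypothesis $b\cdot x\le 0$ is what lets us control the drift term when testing radial-type competitors: it ensures the operator $L v = a_{ij}v_{ij} - b\cdot\nabla v$ does not destroy the sign of the candidate barrier built from $|x|$, because $b\cdot\nabla(\text{radial})$ carries the favourable sign. Concretely I would show that if $w\le -\delta|x|$ near $x_0$ then $w$ cannot attain its boundary value $w(x_0)$ consistently with being $L$-harmonic — one propagates the strict inequality using that $w\to 0$ as $x\to 0$ along the free boundary (so along $\fb u$ the function $w/|x|$ has both signs or is zero), contradicting a strict one-sided bound.

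More precisely, the cleanest route is: assume $\ell(x_0):=\limsup < 0$. Because $u$ is homogeneous-of-degree-one to leading order at $0$ (blow-ups exist and are non-negative $1$-homogeneous, by the standard Alt–Caffarelli theory, or simply because $w(0)=0$ and $w$ is continuous up to $0$), along a sequence $x_k\to 0$ in $\fb u$ we get $w(x_k)/|x_k|\to 0$. Now run the strong maximum principle for $w$ on the domain $\Omega^+\cap B_\rho(x_0)$: the hypothesis $b\cdot x\le 0$ forces, via the barrier $x\mapsto w(x)+\delta|x|$ (which is then $\ge 0$ near $x_0$, $=0$ somewhere on $\fb u$ near $x_0$ if the limsup is attained, and satisfies $L(w+\delta|x|) = \delta\, a_{ij}\partial_{ij}|x| - \delta\, b\cdot\nabla|x| = \delta\big(\tfrac{a_{ii}}{|x|} - \tfrac{a_{ij}x_ix_j}{|x|^3}\big) - \delta\,\tfrac{b\cdot x}{|x|} \ge 0$ since $a_{ij}$ is uniformly elliptic so the Hessian term is $\ge 0$, and $-b\cdot x\ge 0$), that $w+\delta|x|$ is a nonnegative supersolution. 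Applying Hopf's lemma (or the strong maximum principle) at an interior touching point of $w+\delta|x|$ with $0$ yields a contradiction with $w+\delta|x|$ being bounded below by the interior positivity; alternatively one pushes the sign to the free boundary and uses the established behaviour $w(x)=o(|x|)$ there.

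\textbf{Main obstacle.} The delicate point is making the barrier argument rigorous near $\fb u$: one must know enough regularity of $u$ up to the free boundary (Lipschitz, and $C^{1,\alpha}$ on the regular part) so that $w$ is continuous up to $\fb u$ and the touching-point analysis is valid, and one must handle the possibility that the $\limsup$ is not attained along a particular sequence. The structural input $b\cdot x\le 0$ is exactly engineered to kill the unfavourable contribution of the drift when comparing with the radial barrier $|x|$ — without it the sign of $L(w+\delta|x|)$ is indeterminate — so verifying the sign of $L$ applied to the competitor, and arranging the geometry so that Hopf's lemma or the strong maximum principle bites at a point where $w$ is known to vanish to first order (namely near the origin on $\fb u$, or at $x_0$ itself), is where the real work lies. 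I expect the argument for $x_0\ne 0$ and the case $x_0=0$ to be packaged together by the scaling $w_r(x)=w(rx)/r$, which solves the same type of equation with rescaled (still admissible) drift.
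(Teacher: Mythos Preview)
Your barrier approach does not close, and the gap is not just technical. There are two concrete problems.

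First, signs. If the limsup is $-\ell^2<0$, then for $x\in\{u>0\}$ near $x_0$ you have $w(x)\le -\delta|x|$, hence $w+\delta|x|\le 0$, not $\ge 0$. Your computation of $L(w+\delta|x|)\ge 0$ is correct, so $w+\delta|x|$ is a \emph{sub}solution, not a supersolution. A nonpositive subsolution near a boundary point gives you nothing: there is no interior touching point to which Hopf or the strong maximum principle applies, because the only place where $w+\delta|x|$ might vanish is on $\fb u$ itself, and you have no control on $w$ away from $x_0$ to trap an interior extremum.

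Second, and more fundamentally, you are trying to extract an interior extremum directly from a $\limsup$ at a \emph{free boundary} point. That is exactly what fails without an additional step. The paper's proof supplies that step by a blow-up: along a sequence $x_k\to x_0$ realizing the limsup, one rescales $u_k(y)=u(x_k+\rho_k y)/\rho_k$ (with $\rho_k=\dist(x_k,\fb u)$) and passes to a limit minimizer $\bar u$ for which the inequality $\nabla\bar u\cdot y/|y|-\bar u/|y|\le -\ell^2$ holds throughout $\{\bar u>0\}$ \emph{with equality at an interior point} $y_0$. Only then does the strong maximum principle bite --- and it is applied not to $w$ or $w+\delta|x|$, but to $v=-w/|y|=\bar u/|y|-\nabla\bar u\cdot y/|y|$, which solves $a_{ij}v_{ij}=\tilde b\cdot\nabla v+cv$ with $c\le 0$ precisely because $b\cdot y\le 0$. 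This forces $v\equiv\ell^2$, and the contradiction then comes from the \emph{free boundary geometry}: the identity $\nabla\bar u\cdot y=-\ell^2|y|$ on $\fb{\bar u}$ forces the mean curvature to have the wrong sign, contradicting Hopf's lemma applied to $\nabla\bar u\cdot N$ at a regular boundary point. The argument further splits into three cases according to the limit of $|y_k|/\rho_k$, and the case $x_0=0$ is not the easy one you suggest --- it requires a separate density-function argument.

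In short: the missing idea is the blow-up that converts boundary limsup into interior extremum, and the contradiction mechanism is not a radial barrier but the incompatibility of $v\equiv\ell^2>0$ with the sign of the mean curvature of the free boundary.
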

\begin{proof}
Suppose \eqref{eq-ell} fails, then 
for some $x_0\in \fb u$ we must have that 
\begin{equation}\label{eq-ell-fail}
\limsup_{
\substack{
x\to x_0\\
x\in \{u>0\}}
}\left(\nabla u(x) \cdot \frac x{|x|}-\frac {u(x)}{|x|}\right)=-\ell^2<0.
\end{equation}
Let $x_k\in \{u>0\}, x_k\to x_0$,  and 
\begin{equation}\label{}
\lim_{ x_k\to x_0
}\left(\nabla u(x_k) \cdot \frac {x_k}{|x_k|}-\frac {u(x_k)}{|x_k|}\right)=-\ell^2<0.
\end{equation}
Let $y_k\in \fb u$ be the closest point to 
$x_k$ on the free boundary. Denote $\rho_k=|x_k-y_k|$. 
We consider three cases:
\begin{itemize}
\item[Case 1:] $\frac{|y_k|}{\rho_k}\to 0$, 
\item[Case 2:]  $\frac{|y_k|}{\rho_k}\to a\not =0$,
\item[Case 3:]  $\frac{|y_k|}{\rho_k}\to \infty$.
\end{itemize}

\subsection{Case 1:}
We introduce the function 
$u_k(y)=\frac{u(x_k+\rho_k y)}{\rho_k}$. Since $\rho_k=|x_k-y_k|\le |x_k-x_0|\to 0$, it follows that 
we can employ a customary compactness argument and show that for a subsequence 
$u_k(x)\to \bar u(x)$, and $\bar u(x)$ is a local minimizer of $J$, see  \cite{ACF-Quasi}.  
Moreover, since $x=y_k+\rho_k y\to x_0$ for $y\in B_2$, and 
\[
\frac{y_k+\rho_ky}{|y_k+\rho_k y|}=\frac{\frac{y_k}{\rho_k}+y}{\left|\frac{y_k}{\rho_k}+y\right|}\to \frac y{|y|}, 
\quad \frac{\rho_k}{|y_k+\rho_k y|}=\frac 1{\left|\frac{y_k}{\rho_k}+y\right|}\to \frac1{|y|}, 
\]
we get from \eqref{eq-ell-fail} that
\begin{equation}\label{eq-ell-fail-00}
\nabla \bar u(y) \cdot \frac y{|y|}-\frac {\bar u(y)}{|y|}\le -\ell^2.
\end{equation}
Moreover, by construction, there is a point $y_0\in \{u_0>0\}, |y_0|=1$, such that 
$B_1(y_0)\in \{\bar u>0\}$, $0\in \partial B_1(y_0)\cap \partial \{\bar u>0\}$ and 
\begin{equation}\label{eq-ell-fail-01}
\nabla \bar u(y_0) \cdot \frac{ y_0}{|y_0|}-\frac {\bar u(y_0)}{|y_0|}=-\ell^2.
\end{equation}
Introduce, $v(y)=\frac {\bar u(y)}{|y|}-\nabla \bar u(y) \cdot \frac{ y}{|y|}$. 
Then it follows from \eqref{eq-ell-fail-00} and \eqref{eq-ell-fail-01}, 
that $v(y)\ge \ell^2$ in $B_1(y_0)\subset \{\bar u>0\}$ and $v(y_0)=\ell^2.$
Therefore, $v$ has a local minimum at $y_0$. We next show that 
$v(y)$ is constant in the connected component of $\{\bar u>0\}$ containing $y_0$. 
To see this we employ the strong maximum principle.
First observe that 
\begin{align*}
v_i
&=
\frac{\bar u_i}{|y|}-\frac{\bar u y_i}{|y|^3}-\frac{\bar u_{mi}y_m+\bar u_i}{|y|}+\frac{\bar u_my_my_i}{|y|^3}\\
&=
-\frac{\bar uy_i}{|y|^3}-\frac{\bar u_{mi}y_m}{|y|}+\frac{\bar u_my_my_i}{|y|^3}\\
&=
-\frac{ vy_i}{|y|^2}-\frac{\bar u_{mi}y_m}{|y|}.
\end{align*}
Next, we compute the Hessian
\begin{align*}
v_{ij}
&=
-\frac{\bar u_{mij}y_m}{|y|}-\frac{\bar u_{ij}}{|y|}+\frac{\bar u_{mi}y_my_j}{|y|^3}\\
&
+\frac{\bar u_{mj}y_my_i+\bar u_m\delta_{mj}y_i+\bar u_my_m\delta_{ij}}{|y|^3}-3\frac{\bar u_my_my_iy_j}{|y|^5}\\
&
-\frac{\bar u_jy_i}{|y|^3}-\frac{\bar u\delta_{ij}}{|y|^3}+3\frac{\bar uy_iy_j}{|y|^5}\\
&=
-\frac{\bar u_{mij}y_m}{|y|}-\frac{\bar u_{ij}}{|y|}+\frac{\bar u_{mi}y_my_j+\bar u_{mj}y_my_i}{|y|^3}-\frac{\delta_{ij}}{|y|^3}(\bar u-\bar u_my_m)\\
&+
3\frac{y_iy_j}{|y|^5}(\bar u-\bar u_my_m). 
\end{align*}
 After contracting the Hessian with the matrix $a_{ij}$ we obtain
 \begin{align*}
 a_{ij}v_{ij}
 &=
 -\frac1{|y|}a_{ij}\bar u_{ijm}y_m-\frac1{|y|}a_{ij}\bar u_{ij}+\frac2{|y|^3}a_{ij}y_j(\bar u_{mi}y_m)\\
 &+
 \left(3\frac{a_{ij}y_iy_j}{|y|^5}-
 \frac{\textrm{Trace}a_{ij}}{|y|^3}\right)(\bar u-\nabla \bar u \cdot y)\\
 &=
 \frac1{|y|}\partial_{y_m}(a_{ij})\bar u_{ij}y_m+\frac2{|y|^3}a_{ij}y_j(\bar u_{mi}y_m)\\
 &+
 \left(3\frac{a_{ij}y_iy_j}{|y|^5}-
 \frac{\textrm{Trace}a_{ij}}{|y|^3}\right)(\bar u-\nabla \bar u \cdot y)\\
 &=
 -b\cdot \frac{\nabla \bar u_my_m}{|y|}+\frac2{|y|^3}a_{ij}y_j(\bar u_{mi}y_m)\\
 &+
 \frac{v}{|y|^2}\left(3\frac{a_{ij}y_iy_j}{|y|^2}-
 \textrm{Trace}a_{ij}\right),
 \end{align*}
where to get the last line we used \eqref{eq-b-der}.
From the computation of the first derivatives we see that 
$\nabla v= -\frac y{|y|^3}v-\frac{\nabla \bar u_my_m}{|y|}$, hence $a_{ij}v_{ij}$ can be 
simplified as follows 
\begin{align*}
a_{ij}v_{ij}
&=
b\cdot \left[ \nabla v+\frac y{|y|^3}v \right]-\frac2{|y|^2}a_{ij}y_j \left[ v_i+\frac {y_i}{|y|^2}v \right]\\
&+
 \frac{v}{|y|^2}\left(3\frac{a_{ij}y_iy_j}{|y|^2}-
 \textrm{Trace}a_{ij}\right)\\
 &=
 \tilde b\cdot \nabla v+c v, 
\end{align*}
where 
\[
\tilde b^k=b^k-\frac{2a_{kj}y_j}{|y|^2}, \quad c=\frac{b\cdot y}{|y|^3}-\frac{2a_{ij}y_iy_j}{|y|^4}
+
 \frac{1}{|y|^2}\left(3\frac{a_{ij}y_iy_j}{|y|^2}-
 \textrm{Trace}a_{ij}\right).
\]
Decomposing $c=c^+-c^-$ into positive and negative parts, and noting that 
$v\ge \ell^2>0$  it follows that 
\[
a_{ij}v_{ij}= \tilde b\cdot \nabla v+c^+ v-c^-v\le \tilde b\cdot \nabla v+c^+ v, 
\]
or equivalently 
\[
a_{ij}v_{ij}-\tilde b\cdot \nabla v-c^+ v\le 0.
\]
Since by assumption $b\cdot y\le 0$, it follows that $c\le \frac{1}{|y|^2}\left(\frac{a_{ij}y_iy_j}{|y|^2}-
 \textrm{Trace}a_{ij}\right)\le 0$. Consequently, $c^+=0$. Note that for the Laplace operator $b\equiv0$. 

Hence from the strong maximum principle we conclude that 
$v(y)=\ell^2$ for any $y\in \{\bar u>0\}$. 

In order to finish the proof for the Case 1, we will show that 
$\ell^2\not =0$ contradicts the fact that the mean curvature of the 
the smooth portions of free boundary is positive, see Appendix.
 On the smooth portion of the free boundary the 
 condition 
 $v=\ell^2$ means that $ \nabla \bar u\cdot y=-\ell^2|y|.$
 Note that $\nabla \bar u$ is the unit normal pointing into 
 $\{\bar u>0\}$. Let $y(s)$ be an arc-length parametrization of a 
 planar curve in  the free boundary near some regular point, say $z_0\not =0$,  and $z_0=y(0)$.
 Then $N\cdot y(s)=-\ell^2|y(s)|$ , where $N(s)=\nabla \bar u(y(s))$ is the unit normal 
 on the curve pointing into $\{\bar u>0\}$. Differentiating in arc-length parameter $s$ yields 
 \[
 \dot N\cdot y+N\cdot \dot y=-\ell^2\frac{\dot y\cdot y}{|y|}.
 \]  
 Since $\dot y=\tau$ is the unit tangent we get that $N\cdot \dot y=0$, and from the 
 Frenet equations $\dot N=-k\tau, \dot \tau =k N$, we can rewrite the last equation in the 
 following equivalent form 
 \[
 -k\tau\cdot y=-\ell^2\frac{\tau \cdot y}{|y|}.
 \]
 Cancelling $\tau\cdot y$ and choosing $\dot y(0)$ as a principal direction 
 we see that 
 \begin{equation}
 \sum_{i=1}^{n-1}-k_i=-(n-1)\frac{\ell^2}{|y|}.
 \end{equation}
Differentiating $\bar u(y(s))=0$ twice in $s$ we obtain 
$\bar u_{\tau \tau }+\nabla \bar u\cdot \dot \tau =0$. In the 
principal coordinate system coordinates, at $z_0$,  the 
equation $a_{ij}\bar u_{ij}=0$ yields $F'(1)\sum_{i=1}^{n-1}\bar u_{\tau_i\tau_i}+(F'(1)+2F''(1))\bar u_{NN}=0.$
From here we infer that at $z_0$
\begin{equation}
-\left (1+2\frac{F''(1)}{F'(1)}\right )\bar u_{NN}=\sum_{i=1}^{n-1}\bar u_{\tau_i\tau_i}
=
-\sum_{i=1}^{n-1}N\cdot \dot \tau_i=-\sum_{i=1}^{n-1}k_i=-(n-1)\frac{\ell^2}{|y|}.
\end{equation}
Since $h(y)=\nabla \bar u(y)\cdot {N(z_0)}$ has its maximum achieved at the 
boundary point $z_0$, then from Hopf's lemma it follows that 
$\partial _{-N_0}h>0$ at $z_0$, or equivalently $\bar u_{N_0N_0}<0$, where $N_0=N(z_0)$.  
This yields
\[
-(n-1)\frac{\ell^2}{|y|}>0,
\]
which is a contradiction. 

\subsection{Case 2:} As for the Case 2, we first observe that 
the blow up limit $\bar u$ of $u_k(y)=\frac{u(y_k+ y \rho_k)}{\rho_k}, y\in B_2$,
satisfies the inequality 
\[
 \nabla  \bar u(y)\cdot\frac{E_0+y/a}{|E_0+y/a|}-\bar u(y)\frac{1/a}{|E_0+y/a|}\le -\ell^2, 
\]
with equality holding at some interior point $y_0$ such that $B_1(y_0)\subset \{\bar u>0\}$.
Observe that 
\begin{align}
\frac{x_k+y\rho_k}{|x_k+y\rho_k|}
= \frac{\frac{x_k}{|x_k|}+y\frac{\rho_k}{|y_k|}}{\left|\frac{x_k}{|x_k|}+y\frac{\rho_k}{|y_k|}\right|}
\to \frac{E_0+y/a}{|E_0+y/a|}, \\ 
\frac{\rho_k}{|x_k+y \rho_k|}
= \frac{\frac{\rho_k}{|y_k|}}{\left|\frac{x_k}{|x_k|}+y\frac{\rho_k}{|y_k|}\right|}.
\to \frac{1/a}{|E_0+y/a|},
\end{align}
for some unit vector $E_0$.

Setting $U(z)=u(a(z-E_0)), z=E_0+y/a$, we can 
reduce this case  to to the previous one, since 
$\nabla U\cdot \frac z{|z|}-\frac{U(z)}{|z|}\le -a\ell^2$, 
with equality holding at some interior point of $\{U>0\}$, and hence 
preceding proof applies to this case too. 
After applying the strong maximum principle one can see that 
\begin{equation}
 \nabla  \bar u(y)\cdot\frac{E_0+y/a}{|E_0+y/a|}-\bar u(y)\frac{1/a}{|E_0+y/a|}=-\ell^2, 
\end{equation}
in $\{\bar u>0\}$. From this point on, we can either proceed like in the 
previous case, or, alternatively, one may  consider the  blow out limit, i.e. 
taking $y=z_0+Rz$, where $z_0\in \fb {\bar u}$ and let $R\to \infty.$
It is obvious that the scaled functions $\bar u_R(z)=\bar u(z_0+Rz)/R$ are local minimizer, 
hence applying a customary  compactness argument, one readily verifies that 
for any sequence $R_k\to \infty$, there is a  subsequence still denoted by 
$R_k$, so that the functions $\bar u_k(z)=\bar u(z_0+R_kz)/R_k$ converge 
locally uniformly to a  local minimizer $\bar u ^\infty$, such that 
\begin{equation}
 \nabla  \bar u^\infty(z)\cdot\frac{z}{|z|}-\bar u^\infty(z)\frac{1}{|z|}=-\ell^2. 
\end{equation}
This, as we have seen in the proof of the previous case, yields that $\ell=0$, and this finishes the 
argument for Case 2.

\subsection{Case 3:} For the remaining case, $\frac {|y_k|}{\rho_k}\to \infty$, we first assume that $x_0\not =0.$
Let $\sigma_k=|x_0-y_k|$ and take $x=x_0+\sigma_k y, y\in B_2$.
The scaled function $u_k(y)=u(x_0+\sigma_k y)/\sigma_k$, for a suitable
subsequence, converge to a local minimizer $\bar u$, such that 
for $y\in B_2\cap \{ \bar u>0\}$, 
\[
\nabla \bar u(y)\cdot x_0\leq -\ell^2|x_0|.
\]

Hence 
for $e_0=-\frac{x_0}{|x_0|}$, we have 
$\nabla \bar u(y) \cdot e_0\ge {\ell^2}$.
We infer from here that the minimizer $\bar u$ is strictly monotone near 
the free boundary point $y_0\lim_{k\to \infty}\frac {y_k}{\sigma_k}$, for a suitable subsequence. 
Applying the flatness implies $C^{1, \alpha}$
regularity theory \cite{Feldman}, we see that $\fb {\bar u}$ is smooth near $y_0$.
Consequently, from the convergence of $\fb {u_k}$ to $\fb {\bar u}$ in the Hausdorff distance \cite{ACF-Quasi},
it follows that the free boundaries $\fb {u_k}$ are uniformly smooth near $y_k\in \fb {u_k}$. 
Therefore, by construction, there is a point $y_0\in \partial B_1\cap \fb {\bar u}$ such that 
\begin{equation}\label{}
\lim_{
\substack{
y\to y_0\\
y\in \{\bar u>0\}}
}\left(\nabla \bar u(y) \cdot x_0\right)=-\ell^2|x_0|<0.
\end{equation}
It follows that $y_0$ is a maximum point for $h(y)=\nabla \bar u(y)\cdot x_0$.
Letting $N=\nabla \bar u(y_0)$, the inner unit normal pointing into $\{\bar u>0\}$, we see 
from the Hopf lemma that $\partial_{-N}h(y_0)>0$. Decomposing $x_0=\alpha N+\beta T$, where 
$T\perp N$, we see that 
\[
0< \partial_{-N}h(y_0)=\alpha (-\bar u_{NN}(y_0))-\beta \bar u_{NT}(y_0).
\]
The last term on the right hand side vanishes at $y_0$. Indeed, let $y(s)$
be an arc-length parametrized curve such that $y(0)=y_0$, and 
$\dot y(0)=T$. Differentiating the free boundary condition $|\nabla \bar u(y(s))|^2=1$ in $s$, we 
obtain that $\bar u_{NT}(y_0)=0$. 
Furthermore, from $\nabla \bar u(y_0)\cdot x_0<0$ we get that $\alpha<0$, and from the 
mean curvature equation $-\bar u_{NN}(y_0)>0$ (see the Appendix), it follows that 
$\alpha (-\bar u_{NN}(y_0))<0$. This is in  contradiction with 
$\partial_{-N}h(y_0)>0$.

Finally, assume that $x_0=0$. In this case, we scale $u$ by $\sigma_k=|y_k|\to 0$, then, as before,  
we can employ the same procedure 
to construct a blow up limit $\bar u(y)$, such that $\bar u$
is a local minimizer and 
 
\begin{equation}\label{eq-mon-K}
 \nabla  \bar u(y)\cdot\frac{y}{|y|}-\bar u(y)\frac{1}{|y|}\le-\ell^2, \quad y\in B_1\cap \{\bar u>0\}. 
\end{equation}
On the regular free boundary points $ \nabla \bar u(y)\cdot{y}\le -\ell^2{|y|}$.
Introduce the density function 
\begin{equation}
K(r)=\frac1{|B_r|}\int_{B_r}\I {u>0}.
\end{equation}
Since $\Gamma:=\fb {\bar u}$ has locally finite perimeter and the reduced
boundary is a dense open subset, it follows from the divergence theorem that 
\begin{align*}
\int_{\Gamma\cap B_R}\nabla \bar u(y)\cdot y
&=
-\left[\int_{\Gamma\cap B_R}-\nabla \bar u\cdot x+\int_{\{\bar u>0\}\cap \partial B_R}\frac x{|x|}\cdot x\right]
+
\int_{\{\bar u>0\}\cap \partial B_R}\frac x{|x|}\cdot x\\
&=
-\int_{B_R\cap \{\bar u>0\}}\div x
+
\int_{\{\bar u>0\}\cap \partial B_R}\frac x{|x|}\cdot x\\
&=
-n\int_{B_R}\I{\bar u>0}+R \int_{\partial B_R}\I{\bar u>0}.
\end{align*}
  Consequently, we infer the formula
  \begin{align}\label{eq-K-deriv}
  \frac{ dK}{dR}=\frac1{R^{n+1}}\int_{\Gamma\cap B_R}\nabla \bar u(y)\cdot y
  \end{align}
  which, in view of $ \nabla \bar u(y)\cdot{y}\le -\ell^2{|y|}$, satisfied $\H^{n-1}$ a.e. on 
  the free boundary, states that $K(R)$ is a non increasing bounded function.
  For any sequence of positive numbers $R_k\to 0$ and two positive numbers $a<b$ we observe  that 
  the following formula is true
  \begin{equation}
  \int_a^b\frac{dR}{R^{n+1}}\int_{\Gamma_k\cap B_R}\nabla \bar u_k(x)\cdot x=K(bR_k)-K(aR_k)\to 0
  \end{equation}
where $\bar u_k(y)=\bar u(R_k y)/R_k$ and $\Gamma_k=\fb {u_k}$. 
Using a standard compactness argument we can conclude that there is a 
limit $\bar u_0$, such that 
 \begin{equation}
  \int_a^b\frac{dR}{R^{n+1}}\int_{\Gamma_0\cap B_R}\nabla \bar u_0(x)\cdot x=0
  \end{equation}
  Moreover, \eqref{eq-mon-K}
  translates to $\bar u_0$, and we get that on $\Gamma_0=\fb{\bar u_0}$ 
  the inequality  $ \nabla \bar u_0(y)\cdot{y}\le -\ell^2{|y|}$ holds $\H^{n-1}$ a.e., thus 
  \begin{equation}
 0\le  
 \int_a^b\frac{dR}{R^{n+1}}\int_{\Gamma_0\cap B_R}\ell^2|y|
 \le 
 -\int_a^b\frac{dR}{R^{n+1}}\int_{\Gamma_0\cap B_R}\nabla \bar u_0(x)\cdot x=0.
  \end{equation}
  This, yields $\ell=0$ and we finish the proof.
\end{proof}

%
%

\section{Proof of Theorem \ref{theorem-1}}
Recalling \eqref{eq-K-deriv} we see that 
\begin{align}
  \frac{ dK}{dr}=\frac1{r^{n+1}}\int_{\Gamma\cap B_r}\nabla u(x)\cdot x.
  \end{align}
It follows from Proposition \ref{prop-main} that $K'(r)\ge 0$ because on the 
regular portion of the free boundary $\nabla u\cdot x\ge 0$, and hence 
$\H^{n-1}$ a.e. on $\fb u$. 
Repeating the argument in the proof of Case 3, we see that 
for any blow up of $u$ at 0 we must have 
$\nabla u_0(x)\cdot x=0$,  $\H^{n-1}$ a.e. on $\fb {u_0}$.
Now the homogeneity of $u_0$ follows from Lemma \ref{lem-app-2}.

%
%
\section{Appendix}
It is well known that the mean curvature of the free boundary is a 
positive multiple of $-\partial_{NN}^2 u$ at the regular free boundary points 
\cite{K}.
Combining this with Hopf's lemma we can prove that some 
local properties of the regular free boundary points 
instantaneously become global. 
\begin{lemma}
Let $u$ be a blow up limit of a local minimizer, and $x_0\in \fb u$ is a regular point 
of the free boundary. Then either $u$ is a linear function in $\{u>0\}$ or 
$-\partial_{NN}^2 u(x_0)>0$, where $N=\nabla u(x_0)$.
\end{lemma}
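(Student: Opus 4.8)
The plan is to deduce the stated dichotomy directly from Hopf's boundary point lemma applied to the directional derivative $h(y)=\nabla u(y)\cdot N$, with $N=\nabla u(x_0)$. Since $u$ is a blow-up limit of a local minimizer, it is itself a local minimizer of $J_F$ (see \cite{ACF-Quasi}), so $a_{ij}u_{ij}=0$ in $\{u>0\}$ and $|\nabla u|=1$ on $\fb{u}$ by the normalization $\lambda^*=1$; at the regular point $x_0$ the unit vector $N=\nabla u(x_0)$ is the inner normal to $\{u>0\}$. Because $x_0$ is a regular point, near $x_0$ the free boundary is a smooth hypersurface, $\{u>0\}$ satisfies an interior ball condition at $x_0$, and $u$ is smooth up to $\fb{u}$ there by standard elliptic regularity for $a_{ij}u_{ij}=0$ across a smooth surface on which $u$ vanishes with unit gradient; in particular $u_{NN}(x_0)$ is well defined.

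First I would record the equation for $h$: differentiating $a_{ij}u_{ij}=0$ in the fixed direction $N$ gives $a_{ij}h_{ij}=-(\partial_N a_{ij})u_{ij}$, and since $\partial_N a_{ij}$ is linear in $D^2u$, contracting with $u_{ij}$ yields an expression linear in $\nabla h=D^2u\,N$; hence $h$ solves a linear equation $a_{ij}h_{ij}+\beta\cdot\nabla h=0$ in $\{u>0\}$ with continuous coefficients near $x_0$ and no zeroth order term, uniformly elliptic there by $(\mathbf{F}_1)$--$(\mathbf{F}_2)$ since $|\nabla u|$ is close to $1$. Next I would observe that $h(x_0)=|\nabla u(x_0)|^2=1$ while $h(y)=\nabla u(y)\cdot N\le|\nabla u(y)|\le 1$, using the sharp gradient bound $|\nabla u|\le\lambda^*=1$ for minimizers of $J_F$, so that $h$ attains its maximum over $\overline{\{u>0\}}$ (over a small interior ball tangent to $\fb{u}$ at $x_0$) at the boundary point $x_0$. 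Then I would invoke Hopf's lemma at $x_0$: either the outer normal derivative of $h$ is strictly positive at $x_0$, or $h$ is constant on the connected component $U$ of $\{u>0\}$ abutting $x_0$. In the first case the outer normal is $-N$ and $\partial_{-N}h(x_0)=-\sum_{i,j}u_{ij}(x_0)N_iN_j=-u_{NN}(x_0)>0$, which is the second alternative. In the second case $\nabla u\cdot N\equiv 1=|N|^2$ on $U$, so equality in the Cauchy--Schwarz inequality $\nabla u\cdot N\le|\nabla u|\le 1$ forces $\nabla u\equiv N$ on $U$, whence $u(y)=N\cdot(y-x_0)$ on $U$ since $u(x_0)=0$; that is, $u$ is linear in $\{u>0\}$. (In the language of the remark opening the Appendix, the first case asserts that the free boundary has strictly positive mean curvature at $x_0$, the second that it is a piece of a hyperplane.)

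The step I expect to be the main obstacle is arranging that $h$ takes its maximum at $x_0$, which rests on the one-sided gradient estimate $|\nabla u|\le 1$ near the free boundary and on its stability under blow-up. I would handle it by quoting the Lipschitz bound with optimal constant for minimizers of $J_F$ from \cite{ACF-Quasi}; alternatively one notes that $|\nabla u|^2$ is a subsolution of a linear elliptic equation in $\{u>0\}$ --- obtained by differentiating $a_{ij}u_{ij}=0$ twice and using $F',F''\ge 0$ --- so the maximum principle bounds it by its value $1$ on $\fb{u}$. A secondary, routine point is checking the regularity needed for Hopf's lemma ($u\in C^3$ up to $x_0$ together with the interior ball condition), which is precisely what the hypothesis that $x_0$ is a regular point provides, via the flatness-implies-$C^{1,\alpha}$ theory and the subsequent bootstrap.
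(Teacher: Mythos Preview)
Your proof is correct and follows essentially the same approach as the paper: both apply Hopf's lemma to $h=\nabla u\cdot N$, which attains its maximum at $x_0$ thanks to the global bound $|\nabla u|\le 1$ for blow-up limits (quoted from \cite{ACF-Quasi}). The only minor difference is in the constant-$h$ alternative: you conclude $\nabla u\equiv N$ directly from equality in Cauchy--Schwarz, while the paper first infers that the free boundary is locally a hyperplane and then applies Hopf's lemma once more to $u-L$ for the matching linear solution $L$; your route is a bit more direct, but the core idea is the same.
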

\begin{proof}
Since $u$ is a blow up limit then it follows from \cite{ACF-Quasi}
that $|\nabla u|\le1$, with equality on the regular free boundary points. 
Let $N=\nabla u(x_0)$ be the inner unit normal point to 
$\Omega ^+$, then $h(x)=\nabla u(x)\cdot N$ achieves its maximum at 
$x_0$. From 
Hopf's lemma it follows that $\partial_{-N}h(x_0)>0$, or equivalently, 
$-\partial ^2_{NN}u(x_0)>0$ unless $h$ is constant. The latter case 
implies that near $x_0$ the free boundary is a piece of a hyperplane, 
and consequently there is a linear function $L(x)$ such that 
$a_{ij}(u-L)_{ij}=0$ in $\{u>0\}$ near the planar piece of the free boundary. 
Applying Hopf's lemma to $u-L$ one more time the result follows. 
\end{proof}

\begin{lemma}\label{lem-app-2}
Let $u$ be as in previous lemma. Moreover, let us assume that 
$\nabla u (x)\cdot x=0$ for $x\in \fb u$ in some neighborhood of a regular free boundary point $x_0$. 
Then  $u$ is a homogeneous function of degree one on the 
component of the positivity set containing  $x_0$ on its boundary.
\end{lemma}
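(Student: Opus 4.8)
The plan is to combine the pointwise identity $\nabla u(x)\cdot x = 0$ on the free boundary near $x_0$ with the equation $a_{ij}w_{ij} = b\cdot\nabla w$ satisfied by $w(x) = \nabla u(x)\cdot x - u(x)$ (Proposition \ref{prop-strong-max}), and then invoke the strong maximum principle in the spirit of the Corollary following that proposition. First I would observe that since $u$ is a blow-up limit, by the previous lemma either $u$ is linear on $\{u>0\}$ — in which case homogeneity of degree one is immediate once one checks $u(0)=0$, which follows from $0\in\fb u$ and continuity — or $-\partial^2_{NN}u(x_0)>0$ at the regular point. So I would concentrate on the nondegenerate alternative and aim to derive $w\equiv 0$ on the relevant component of $\Omega^+$.

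The key step is to show that $w$ attains an interior extremum, or else vanishes along enough of the boundary to force $w\equiv 0$. On the regular portion of $\fb u$ near $x_0$ we have $u=0$ and $\nabla u\cdot x = 0$, hence $w = \nabla u\cdot x - u = 0$ there; moreover $w$ is continuous up to this smooth piece of the boundary. Thus $w$ vanishes on a relatively open patch $\Sigma$ of $\partial\Omega^+$. Next I would compute the normal derivative of $w$ along $\Sigma$: using $w_i = u_{mi}x_m$ and writing $x_0 = \alpha N + \beta T$ with $N=\nabla u(x_0)$ the inner normal and $T$ tangential, together with $\nabla u\cdot x_0 = 0$ forcing $\alpha = 0$ (since on the regular free boundary $|\nabla u|=1$, so $x_0$ is purely tangential), one finds $\partial_N w(x_0)$ is expressible through $u_{NN}$, $u_{NT}$, and the tangential Hessian. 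Differentiating $|\nabla u|^2=1$ along the free boundary gives $u_{NT}(x_0)=0$ as in Case 3 of Proposition \ref{prop-main}, and differentiating $u=0$ and $\nabla u\cdot x=0$ twice along an arc-length curve through $x_0$ ties the tangential second derivatives to the curvatures $k_i$ and to $\ell=0$ here. The upshot is that $\partial_N w(x_0)=0$ as well — so $w$ has a zero of the boundary point together with vanishing conormal derivative on $\Sigma$.

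With $a_{ij}w_{ij} - b\cdot\nabla w = 0$ in $\Omega^+$, $w=0$ on the smooth patch $\Sigma\subset\partial\Omega^+$, and $\partial_\nu w = 0$ on $\Sigma$, the Hopf boundary point lemma applied to $w$ (and to $-w$) at points of $\Sigma$ forces $w\equiv 0$ in the connected component of $\Omega^+$ whose boundary meets $\Sigma$: if $w$ were not identically zero, it would be (say) positive somewhere, hence attain a positive interior maximum or have a strict inward normal derivative at a boundary maximum on $\Sigma$, contradicting $\partial_\nu w=0$; and once $w$ attains an extremum we are back in the situation of the Corollary. Then $w\equiv 0$ means $\nabla u(x)\cdot x = u(x)$ throughout that component, which is exactly Euler's identity, so $u$ is positively homogeneous of degree one there and $\{u>0\}$ is a cone. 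The main obstacle I expect is the boundary-regularity bookkeeping: one must know that near $x_0$ the free boundary is $C^{2,\alpha}$ (so that $w$ and its first derivatives extend continuously to $\Sigma$ and Hopf's lemma applies), and that $u\in C^2$ up to $\Sigma$; these follow from the flatness-implies-$C^{1,\alpha}$ theory together with Schauder estimates for the equation $a_{ij}u_{ij}=0$ as invoked in Case 3, but the precise patching of interior and boundary regularity, and checking that $b$ and $\tilde b$-type lower-order coefficients stay bounded up to $\Sigma$, is the delicate point that needs care.
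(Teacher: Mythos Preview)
Your proposal is correct and follows essentially the same route as the paper: show that $w=\nabla u\cdot x-u$ vanishes on the regular patch $\Sigma$ together with its normal derivative (using that $x$ is purely tangential there because $\nabla u\cdot x=0$ and $|\nabla u|=1$, and that $u_{NT}=0$ from differentiating $|\nabla u|^2=1$ along $\Sigma$), then invoke Hopf/strong maximum principle for the equation $a_{ij}w_{ij}=b\cdot\nabla w$ from Proposition~\ref{prop-strong-max} to force $w\equiv 0$ on the component. The paper's write-up is terser---it records the boundary information as $w(z)=|\nabla w(z)|=0$ for $z\in\Sigma$ and then asserts $w\equiv0$---but the argument and the key computations are the same as yours.
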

\begin{proof}
 Let $N=\nabla u(z)$, $z$
close to $x_0$, then 
$\partial_N w(z)=ND^2(z)z=0$, where $w(x)=\nabla u(x)\cdot x-u(x)$. Indeed, 
since $z\cdot N=0$, it follows that 
$z$ is a tangential direction at $z$. Differentiating the 
free boundary condition $|\nabla u|^2=1$ in tangential 
directions, we see that $u_{NT}(z)=0$ for any tangential 
direction $T$, 
and the claim follows.  This shows that $w(z)=|\nabla w(z)|=0$ for every 
$z$ close to $x_0$. By Proposition \ref{prop-strong-max} $w(x)=\nabla u(x)\cdot x-u(x)$ 
solves the uniformly elliptic equation $a_{ij} w_{ij}=b\cdot \nabla w$. 
Consequently, $w=0$ as claimed. 
\end{proof}

%
%
\section*{\footnotesize Acknowledgements}
The author was partially supported by EPSRC grant EP/S03157X/1 Mean curvature
measure of free boundary.
\footnotesize{}


\begin{bibdiv}
\begin{biblist}

\bib{AC}{article}{
   author={Alt, H. W.},
   author={Caffarelli, L. A.},
   title={Existence and regularity for a minimum problem with free boundary},
   journal={J. Reine Angew. Math.},
   volume={325},
   date={1981},
   pages={105--144},
   issn={0075-4102},
   review={\MR{618549}},
}
\bib{APh}{article}{
   author={Alt, H. W.},
   author={Phillips, D.},
   title={A free boundary problem for semilinear elliptic equations},
   journal={J. Reine Angew. Math.},
   volume={368},
   date={1986},
   pages={63--107},
   issn={0075-4102},
   review={\MR{850615}},
}
\bib{ACF}{article}{
   author={Alt, Hans Wilhelm},
   author={Caffarelli, Luis A.},
   author={Friedman, Avner},
   title={Variational problems with two phases and their free boundaries},
   journal={Trans. Amer. Math. Soc.},
   volume={282},
   date={1984},
   number={2},
   pages={431--461},
   issn={0002-9947},
   review={\MR{732100}},
   doi={10.2307/1999245},
}

\bib{ACF-Quasi}{article}{
   author={Alt, Hans Wilhelm},
   author={Caffarelli, Luis A.},
   author={Friedman, Avner},
   title={A free boundary problem for quasilinear elliptic equations},
   journal={Ann. Scuola Norm. Sup. Pisa Cl. Sci. (4)},
   volume={11},
   date={1984},
   number={1},
   pages={1--44},
   issn={0391-173X},
   review={\MR{752578}},
}

\bib{Feldman}{article}{
   author={Feldman, Mikhail},
   title={Regularity of Lipschitz free boundaries in two-phase problems for
   fully nonlinear elliptic equations},
   journal={Indiana Univ. Math. J.},
   volume={50},
   date={2001},
   number={3},
   pages={1171--1200},
   issn={0022-2518},
   review={\MR{1871352}},
   doi={10.1512/iumj.2001.50.1921},
}

\bib{K}{article}{
   author={Karakhanyan, Aram},
   title={Full and partial regularity for a class of nonlinear free boundary
   problems},
   journal={Ann. Inst. H. Poincar\'{e} C Anal. Non Lin\'{e}aire},
   volume={38},
   date={2021},
   number={4},
   pages={981--999},
   issn={0294-1449},
   review={\MR{4266232}},
   doi={10.1016/j.anihpc.2020.09.008},
}

\bib{Simon}{book}{
   author={Simon, Leon},
   title={Lectures on geometric measure theory},
   series={Proceedings of the Centre for Mathematical Analysis, Australian
   National University},
   volume={3},
   publisher={Australian National University, Centre for Mathematical
   Analysis, Canberra},
   date={1983},
   pages={vii+272},
   isbn={0-86784-429-9},
   review={\MR{756417}},
}

\bib{Spruck}{article}{
   author={Spruck, Joel},
   title={Uniqueness in a diffusion model of population biology},
   journal={Comm. Partial Differential Equations},
   volume={8},
   date={1983},
   number={15},
   pages={1605--1620},
   issn={0360-5302},
   review={\MR{729195}},
   doi={10.1080/03605308308820317},
}

\bib{W-cpde}{article}{
   author={Weiss, Georg S.},
   title={Partial regularity for weak solutions of an elliptic free boundary
   problem},
   journal={Comm. Partial Differential Equations},
   volume={23},
   date={1998},
   number={3-4},
   pages={439--455},
   issn={0360-5302},
   review={\MR{1620644}},
   doi={10.1080/03605309808821352},
}

\end{biblist}
\end{bibdiv}

\end{document}